\newcommand{\Line}{\ell}
\newcommand{\LI}{\Line_{\infty}}
\newcommand{\LIP}{\Line_{\infty,P}}
\newcommand{\LIS}{\Line_{\infty,S}}
\newcommand{\LIV}{\Line_{\infty,V}}
\newcommand{\bbR}{\mathbb{R}}
\newcommand{\bbP}{\mathbb{P}}
\newcommand{\descR}{\bbR\bbP}
\newcommand{\descRt}{\bbR\bbP^2}
\newcommand{\descP}{\descR_P}
\newcommand{\descPt}{\descRt_P}
\newcommand{\descSt}{\descRt_S}
\newcommand{\descVt}{\descRt_V}
\newcommand{\descShortP}{P}
\newcommand{\descShortS}{S}
\newcommand{\descShortV}{V}
\newcommand{\iso}{\mathcal{I}}
\newcommand{\isoEq}{\cong}
\newcommand{\isoPS}{\iso_{\descShortP\descShortS}}
\newcommand{\isoPV}{\iso_{\descShortP\descShortV}}
\newcommand{\Span}{\mathrm{span}}
\def\nvec{\boldsymbol{n}}
\numberwithin{equation}{section}
\theoremstyle{plain} 
\newtheorem{theorem}[equation]{Theorem}
\newtheorem{lemma}[equation]{Lemma}
\theoremstyle{definition}
\newtheorem{definition}[equation]{Definition}
\theoremstyle{remark}
\newtheorem{remark}[equation]{Remark}
\begin{document}

\title{On Real Projective Plane Constructions and Their Isomorphisms}

\author{Noah Everett} \curraddr{Department of Mathematics, South Dakota School of Mines and Technology, Rapid City, SD 57701} \curraddr{Department of Physics, South Dakota School of Mines and Technology, Rapid City, SD 57701} \email{Noah.Everett@mines.sdsmt.edu}
\author{Patrick Fleming} \curraddr{Department of Mathematics, South Dakota School of Mines and Technology, Rapid City, SD 57701} \email{Patrick.Fleming@sdsmt.edu}

\address{}





\maketitle

\begin{abstract}
The real projective plane ($\descR^2$) has three well known isomorphic constructions: the extended Euclidean plane, unit (hemi)sphere, and $\bbR^3$ vector space.
In this paper, we find isomorphisms that map between these three constructions.
Additionally, we investigate their relationship to direction-sensitive photosensors which use lens(es) to transform light's direction to a position on a local plane.
This transformation, done by lenses, is a physical version of an isomorphism between projective plane constructions.
\end{abstract}






\begin{figure}[h]
    \centering
    \includegraphics[width=1.0\linewidth]{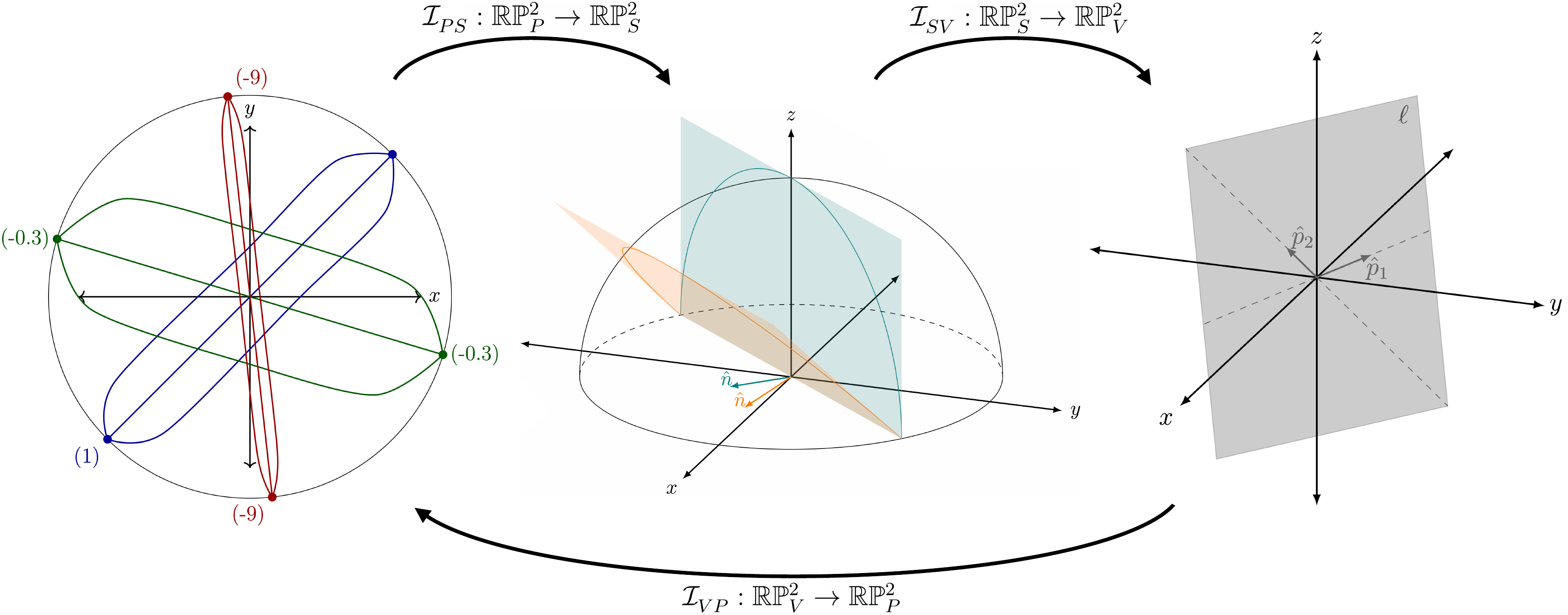}
    \caption{
    Visualization of real projective plane constructions and their isomorphisms.
    \textbf{Left:} Extended Euclidean plane with three chromatically separated groups of lines each with different, arbitrarily chosen slopes.
    The point that each line is incident with the line at infinity is notated by the slope of the line.
    \textbf{Center:} (hemi)sphere construction with two semicircles (lines), the plane containing them, and a normal vector to that plane. 
    \textbf{Right:} $\bbR^3$ vector space with two 1-dimensional subspaces (points) and the plane (line) incident with them. 
    }
    \label{fig:Constructions}
\end{figure}

\section{Motivation}\label{sec:DSPD}
\begin{figure}[h]
    \centering
    \includegraphics[width=\textwidth]{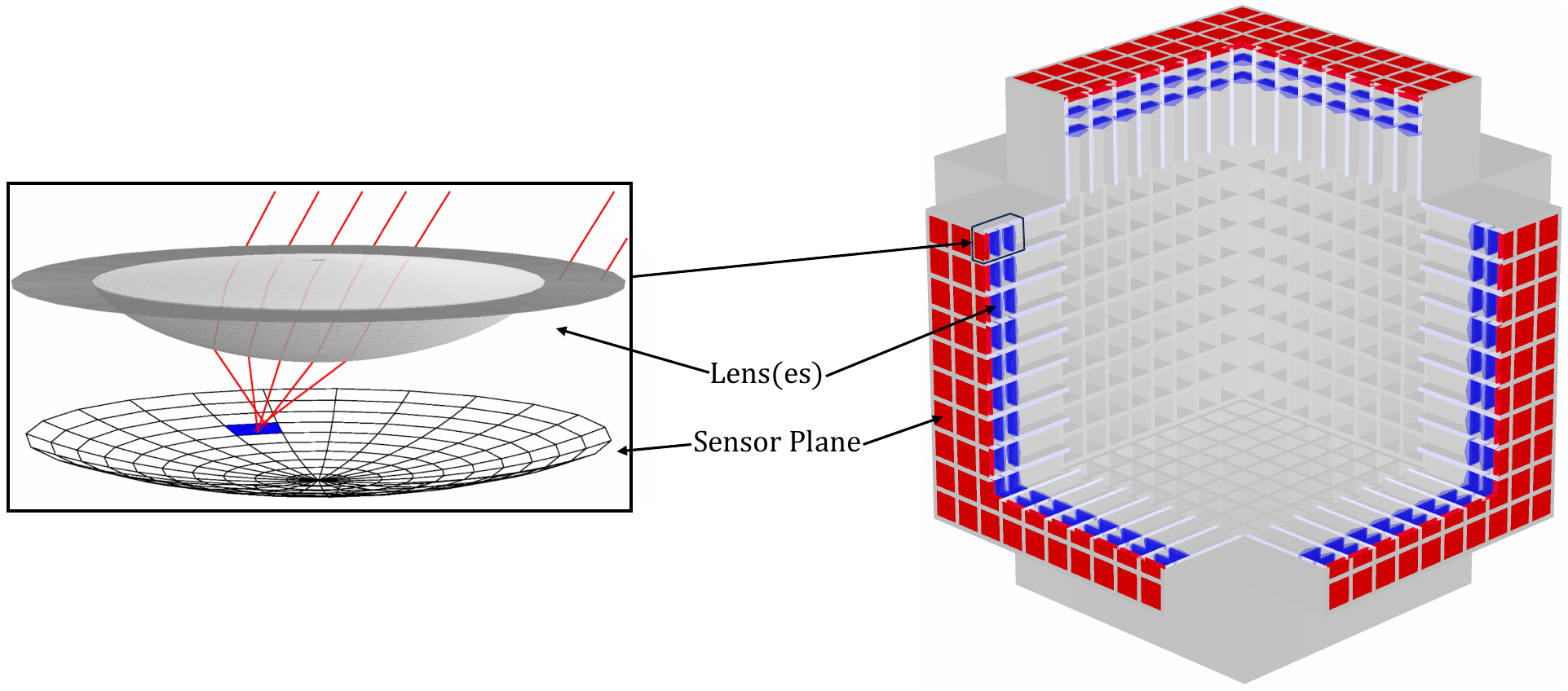}
    \caption{
    \textbf{Left:} 
    Conceptual design of a direction-sensitive photosensor where
    red lines represent photons. 
    These photons enter the sensor from above, are refracted by the intermediate lens, and are finally detected on the bottom sensor plane.
    Figure taken from Ref.~\cite{Dalmasson_2018}.
    \textbf{Right:}
    A larger detector outfitted with direction-sensitive photosensors.
    The sensor planes are red and the lenses are blue. 
    This detector would be filled with a scintillating material which would produce photons when excited by an incident particle.
    }
    \label{fig:DSPD}
\end{figure}

Many modern particle physics experiments use scintillating materials as detection mediums, i.e. events (particle interactions or decays) occur within the scintillator.
Scintillating materials emit light isotropically when excited by a charged particle with sufficient energy.
This light is then detected by photosensors which are used to determine event characteristics.
Commonly used scintillators include liquid or gaseous noble elements such as argon and xenon, or liquid hydrocarbons.
Scintillators are advantageous for studying low-energy events because of their improved energy sensitivity due to their increased light emittance compared to non-scintillating materials like water, where charged particles produce photons via the Cherenkov effect.

To infer the position of an event in a scintillation detector, experiments have traditionally relied on only information gathered by detecting photons produced by the scintillator, those being the time and location of detection.
of photons which are produced by the scintillator.
However, a novel optical detection technique, motivated by Ref.~\cite{Dalmasson_2018}, which we call direction-sensitive photosensors, also provides information about the direction of detected photons.
To do this, a lens system is used to redirect light to specific points on the sensor plane as seen in Figure~\ref{fig:DSPD}.
The final location of light on the sensor plane is correlated with its incoming direction, effectively transforming the light's direction to a physical location.
This technology offers the potential to improve the position resolution for scintillation-based detectors.

When investigating these sensors, we noticed their similarities to the constructions of projective planes.
One can think of the incoming light rays as vectors, with parallel light rays being parallel classes to the base vector (incident with the origin), and the sensor plane as the extended Euclidean plane.
With this description, the lens acts as an isomorphism from the vector space construction to the extended Euclidean plane construction, which are described later.

\section{Introduction}
\begin{definition}
A \textit{projective plane} 
is a space comprised of points 
that satisfies the postulates of projective geometry:
\begin{enumerate}
    \item For any two distinct points, there exists a unique line incident with both of them.
    \item For any two distinct lines, there is a unique point incident with both of them.
    \item There exist at least four unique points such that no line is incident with more than two of them.
\end{enumerate}
\end{definition}

    

\begin{definition}\label{def:Iso}
    An \textit{isomorphism} is a bijective function that maps points to points and preserves incidence structure.
    Let $\psi:A\to B$ be a bijective function that maps points in $A$ to points in $B$.
    For $\psi$ to be an isomorphism, it must preserve lines such that, for all lines $\ell_A\subset A$ where $\ell_A=\{a_1,a_2,\dots\}$, then $\{\psi(a_1),\psi(a_2),\dots\}=\ell_B$ which must be a line in $B$.
    If such a function exists, we write $A\isoEq B$.
\end{definition}

The real projective plane 
$\descRt$ (or $PG(2,\bbR)$) 
has several well-known isomorphic constructions: 
the extended Euclidean plane $\descPt$, 
the unit $\bbR^3$ (hemi)sphere $\descSt$, 
and the $\bbR^3$ vector space $\descVt$ such that 
\mbox{ $\descRt\isoEq\descPt\isoEq\descSt\isoEq\descVt$. }
In this paper, we find isomorphisms 
\mbox{$\iso_{AB}$} 
that map between constructions of $\descRt$ such that 
\mbox{$\iso_{AB}:\descR^2_A\to\descR^2_B$}
where 
\mbox{$A,B\in\{\descShortP,\descShortS,\descShortV\}$} 
and 
\mbox{$A\neq B$}.
Namely, we present isomorphisms from $\descPt$ to $\descSt$ and $\descVt$.


\begin{definition}\label{def:PI}
    In the extended Euclidean plane description ($\descPt$) of $\descRt$, let the \textit{points at infinity} be the set $\{ (p)\mid p\in\bbR\cup\{\infty\} \}$.
    These points are an infinite distance (in terms of the standard norm) from the origin such that, for a point $(p)$ in Cartesian coordinates, $\lim_{x\to\infty}(x,px)=(p)$ where $p$ is the slope away from the origin with respect to the $y$-axis.
    Additionally, we call the point $(\infty)=\lim_{y\to\infty}(c,y)\mid c\in\bbR$ the \textit{point of infinity}.
\end{definition}

\begin{definition}\label{def:LI}
    The \textit{line at infinity} $\LIP$ in $\descPt$ contains all the points at infinity such that $\LIP=\{(p)\mid p\in\bbR\cup\{\infty\}\}$.
    $\LIP$ can also be defined as containing all of the points in $\descR^2$ that are not in the real space $\bbR^2$ such that
    \mbox{$\LIP =\descR^2\cap(\bbR^2)^c=\descR^2-\bbR^2$}.
\end{definition}


\begin{definition}\label{def:PPn}
    The \textit{extended Euclidean plane construction} $\descPt$ of the real projective plane is the $\bbR^2$ space with the addition of the line at infinity, such that 
    \mbox{$\descPt=\bbR^2\cup\LIP$}.
    In this construction, a line $\ell_P\subset\descPt$ with slope $m$ and $y$-intercept $b$ is equivalent to the line $\ell_\bbR=\{(x,mx+b)\mid x\in\bbR\}\subset\bbR^2$ with the addition of the point at infinity $(m)$, such that $\ell_P=\ell_\bbR\cup\{(m)\}$.
\end{definition}

\begin{theorem}
    The extended Euclidean plane construction $(\descPt)$ is a construction of the real projective plane.
\end{theorem}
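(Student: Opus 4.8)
The plan is to verify directly that $\descPt = \bbR^2 \cup \LIP$ satisfies the three postulates of projective geometry from the opening definition, treating points and lines exactly as specified in Definitions~\ref{def:PI}--\ref{def:PPn}. The only points are the ordinary points $(x,y) \in \bbR^2$ and the points at infinity $(p)$ with $p \in \bbR \cup \{\infty\}$; the only lines are the augmented Euclidean lines $\ell_P = \ell_\bbR \cup \{(m)\}$ of slope $m$ (with vertical lines $x = c$ carrying the point $(\infty)$) together with $\LIP$ itself. The whole argument is a case analysis driven by whether the points or lines involved are \emph{ordinary} or \emph{at infinity}, so before starting I would record the single structural fact underlying everything: each ordinary line contains exactly one point at infinity, namely the one recording its slope, while $\LIP$ contains every point at infinity and no ordinary point.

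For the first postulate I would split on how many of the two given points lie at infinity. If both are ordinary, the unique incident line is the classical Euclidean line through them, augmented by its slope's point at infinity (or by $(\infty)$ in the vertical case). If one point is ordinary and the other is $(p)$, the incident line is the unique line of slope $p$ through the ordinary point, and uniqueness follows because any ordinary line through $(p)$ must have slope $p$, while $\LIP$ misses every ordinary point. If both points are at infinity, then $\LIP$ is forced, since an ordinary line meets $\LIP$ in a single point and so cannot contain two distinct points at infinity.

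For the second postulate I would similarly split on the lines. Two ordinary lines of distinct slope meet in the usual unique Euclidean intersection point and have distinct points at infinity; two ordinary lines of equal slope are parallel in $\bbR^2$ but share precisely their common point at infinity $(m)$ (or $(\infty)$), supplying the intersection that ordinary Euclidean geometry lacks; and an ordinary line meets $\LIP$ in exactly its unique point at infinity. This is the step where the construction does its real work, so it is the one I would write most carefully, handling the parallel and vertical subcases uniformly through the slope parameter $m \in \bbR \cup \{\infty\}$. The third postulate is then immediate by exhibiting four ordinary points in general position, e.g. $(0,0),(1,0),(0,1),(1,1)$, and checking that no Euclidean line passes through three of them.

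I do not expect a genuine obstacle here; the content is entirely bookkeeping. The most error-prone point is the treatment of the slope $\infty$: I must confirm that vertical lines behave like any other pencil of parallels, sharing the single point $(\infty)$, so that the second postulate holds with no exceptional case. Keeping the parametrization $m \in \bbR \cup \{\infty\}$ uniform throughout is precisely what prevents that subtlety from becoming a gap.
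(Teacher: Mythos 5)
Your proposal is correct and follows essentially the same route as the paper: a direct case-by-case verification of the three axioms, splitting on whether points and lines are ordinary or at infinity, with the same four witness points $(0,0),(1,0),(0,1),(1,1)$ for the third axiom. The only differences are cosmetic --- you fold the vertical-line cases into a uniform slope parameter $m \in \bbR \cup \{\infty\}$ where the paper lists them separately, and you are somewhat more explicit about the uniqueness halves of axioms (1) and (2), which the paper largely leaves implicit.
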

\begin{proof}
    The extended Euclidean plane construction meets the axioms of projective geometry:
    \begin{enumerate}
    \item For any two distinct points $p_1$ and $p_2$, there exists a unique line $\ell\subset\descPt$ incident with both of them.
    \begin{itemize}
    \item When $p_1=(p_{1x},p_{1y}), p_2=(p_{2x},p_{2y}) \in\bbR^2$ and $p_{1x}\neq p_{2x}$, the line $\ell$ can simply be written in point slope form such that 
    \begin{equation*}
        \ell=\left\{\left(x,\frac{p_{2y}-p_{1y}}{p_{2x}-p_{1x}}(x-p_{1x})+y_{1x}\right)\mid x\in\bbR\right\}\cup\left\{\left(\frac{p_{2y}-p_{1y}}{p_{2x}-p_{1x}}\right)\right\}.
    \end{equation*}
    \item When $p_1=(p_{1x},p_{1y}), p_2=(p_{2x},p_{2y}) \in\bbR^2$ and $p_{1x}=p_{2x}$, the line $\ell$ is vertical such that $\ell=\left\{\left(p_{1x},y\right)\mid y\in\bbR\right\}\cup\{(\infty)\}$.
    \item When $p_1,p_2\in\LIP$, the line $\ell$ is the line at infinity such that $\ell=\LIP$.
    \item When $p_1=(p_{1x},p_{1y})\in\bbR^2$ and $p_{2}=(m)\in\LIP$ and $(m)\neq(\infty)$, the line $\ell$ has a slope $m$ such that $\ell=\left\{\left(x,m(x-p_{1x})+p_{1y}\right)\mid x\in\bbR\right\}\cup\{(m)\}$.
    \item When $p_1=(p_{1x},p_{1y})\in\bbR^2$ and $p_{2}=(\infty)$, the line $\ell$ is a vertical line such that $\ell=\left\{\left(p_{1x},y\right)\mid y\in\bbR\right\}\cup\{(\infty)\}$.
    \end{itemize}
    \item For any two distinct lines $\ell_1$ and $\ell_2$, there is a unique point $p$ incident with both of them.
    \begin{itemize}
    \item When $\ell_1$ and $\ell_2$ are non-vertical lines such that $\ell_1 = \{(x,m_1x+b_1)\mid x\in\bbR\}\cup\{(m_1)\}$ and $\ell_2 = \{(x,m_2x+b_2)\mid x\in\bbR\}\cup\{(m_2)\}$ where $m_1\neq m_2$, the point $p$ can simply be written as $p=\left(\frac{b_2-b_1}{m_1-m_2},\frac{m_1b_2-m_2b_1}{m_1-m_2}\right)$.
    \item When $\ell_1$ is a non-vertical line $\ell_1 = \{(x,m_1x+b_1)\mid x\in\bbR\}\cup\{(m_1)\}$ and $\ell_2$ is a vertical line $\ell_2 = \{(b_2,y)\mid y\in\bbR\}\cup\{(\infty)\}$, the point $p=\left(b_2,m_1b_2+b_1\right)$.
    \item When $\ell_1$ and $\ell_2$ have the same slope such that $\ell_1 = \{(x,mx+b_1)\mid x\in\bbR\}\cup\{(m)\}$ and $\ell_2 = \{(x,mx+b_2)\mid x\in\bbR\}\cup\{(m)\}$ where $b\neq b$, the point $p=\left(m\right)$.
    \item When $\ell_1$ is a non-vertical line $\ell_1 = \{(x,mx+b)\mid x\in\bbR\}\cup\{(m)\}$ and $\ell_2$ is the line at infinity $\ell_2 = \LIP$, the point $p=\left(m\right)$.
    \item When $\ell_1$ is a vertical line such that $\ell_1 = \{(b,y)\mid y\in\bbR\}\cup\{(\infty)\}$ and $\ell_2$ is the line at infinity $\ell_2 = \LIP$, the point $p=\left(\infty\right)$.
    \end{itemize}
    \item There exist at least four unique points such that no line is incident with more than two of them.
    \vspace{1mm}\\
    The points $(0,0)$, $(1,0)$, $(0,1)$, and $(1,1)$ satisfy this axiom.\qedhere
    \end{enumerate}
\end{proof}


\begin{definition}\label{def:SPn}
    Let the \textit{hemisphere construction} $\descSt$ of the real projective plane be exactly half of the points on the sphere
    \mbox{$S^{2}\subset\bbR^{3}$} such that $\descSt\subset S^2$.
    For the sake of generality, we define $S^2$, and thus $\descSt$, to have a radius of $\rho$. 
    $\rho$ is commonly 1 giving a unit hemisphere.
    In this work, for
    \mbox{$s=(s_1,s_2,s_3)\in\bbR^{3}$},
    we let 
    \begin{equation*}
    \LIS = \left\{ s \mid  \sqrt{s_1^2 + s_2^2 + s_3^2}=\rho, s_3=0,-\pi/2<\tan^{-1}s_2/s_1\leq\pi/2 \right\}
    \end{equation*}
    and $H=\left\{ s \mid  \sqrt{s_1^2 + s_2^2 + s_3^2} = \rho,s_3>0 \right\}$.
    Then, $\descSt=H\cup\LIS$.
    We note that this is not the only possible definition for $\LIS$ or $\descSt$ itself.
    In $\descSt$, lines are great semicircles on the surface of $S^2$.
    This construction is generally used as a unit sphere where antipodal points are identified together, and thus its lines would be entire great circles.
\end{definition}

\begin{theorem}
    The hemisphere construction $(\descSt)$ is a construction of the real projective plane.
\end{theorem}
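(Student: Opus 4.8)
The plan is to verify the three projective axioms directly for $\descSt$, exactly as was done for $\descPt$, but organizing the case analysis around the observation that the hemisphere is a concrete model of the sphere $S^2$ with antipodal points identified. Every point of $\descSt$ represents a unique antipodal pair $\{s,-s\}$ on $S^2$: points with $s_3>0$ sit in $H$, while for an antipodal pair on the equator $(s_3=0)$ the angle condition $-\pi/2<\tan^{-1}(s_2/s_1)\le\pi/2$ in the definition of $\LIS$ selects exactly one representative. Lines of $\descSt$, the great semicircles, are likewise the images of the great circles of $S^2$ under this identification.

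With this correspondence in hand, I would handle Axiom~1 as follows. Given two distinct points of $\descSt$, lift them to non-antipodal representatives $s,t\in S^2$; since they are not antipodal, the plane $\Span\{s,t\}$ through the origin is unique, and its intersection with $S^2$ is the unique great circle containing both. Projecting this great circle back into $\descSt$ yields the unique great semicircle incident with the two points. I would record the coordinate sub-cases (both points in $H$, one in $H$ and one in $\LIS$, both in $\LIS$) only to confirm that the resulting great semicircle is a legitimate element of $\descSt$ and that no alternative choice of representatives produces a different line.

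For Axiom~2 the key geometric fact is that two distinct great circles on $S^2$ meet in exactly one antipodal pair: two distinct planes through the origin intersect in a single line through the origin, which meets $S^2$ in the antipodal points $\pm u$. Under the identification this pair collapses to a single point of $\descSt$, giving the required unique intersection, and distinctness of the great circles guarantees the two planes differ so that the intersection is never larger. Axiom~3 is then immediate by exhibiting four images in general position, for instance the points of $\descSt$ lying in the directions of $(0,0,1)$, $(1,0,1)$, $(0,1,1)$, and $(1,1,1)$, no three of which span a common plane through the origin and hence no three of which are collinear.

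The step I expect to be the main obstacle is the boundary bookkeeping along $\LIS$. Because only half of the equator is retained, I must check that the antipodal identification remains consistent there: that each great circle meets the equator in an antipodal pair of which exactly one point lies in $\LIS$, and that a great semicircle closes up correctly as a subset of $\descSt$ rather than losing or doubling its endpoint. The half-open angle convention $-\pi/2<\tan^{-1}(s_2/s_1)\le\pi/2$ is engineered precisely to make this work, and verifying that it picks out a unique representative from every equatorial antipodal pair — so that the great-semicircle lines are unambiguous and Axioms~1 and~2 hold without exception on the boundary — is the delicate part of the argument.
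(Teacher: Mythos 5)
Your proof is correct, and it reaches the axioms by a genuinely different route than the paper does. The paper works directly on the hemisphere in spherical coordinates: for Axiom 1 it splits into the cases $p_1\in H\cup\LIS,\ p_2\in H$ and $p_1,p_2\in\LIS$, for Axiom 2 into the cases $\ell_1,\ell_2\neq\LIS$ and $\ell_2=\LIS$, and in each case it simply asserts that the required great semicircle or intersection point exists; uniqueness is never actually argued, and Axiom 3 is witnessed by the spherical-coordinate points $(\rho,0,\pi/4)$, $(\rho,\pi/2,\pi/4)$, $(\rho,\pi,\pi/4)$, $(\rho,-\pi/2,\pi/4)$. You instead pass through the antipodal quotient of $S^2$, so that Axiom 1 reduces to the uniqueness of the plane $\Span\{s,t\}$ spanned by non-antipodal representatives, and Axiom 2 to the fact that two distinct planes through the origin meet in a single line, hence a single antipodal pair. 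That linear-algebra framing buys you what the paper leaves implicit --- existence and uniqueness both fall out of dimension counting in $\bbR^3$ --- and it makes the kinship with the vector space construction $\descVt$ explicit (your four Axiom 3 points are exactly the ones the paper uses in its $\descVt$ proof). One caveat on the boundary issue you rightly single out as delicate: the paper's literal condition $-\pi/2<\tan^{-1}(s_2/s_1)\leq\pi/2$ cannot select one point per equatorial antipodal pair, because $s_2/s_1=(-s_2)/(-s_1)$ makes the expression antipodally invariant; it must be read as a condition on the azimuthal angle $\theta\in(-\pi,\pi]$, equivalently $s_1>0$ or $(s_1,s_2)=(0,\rho)$. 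Under that (clearly intended) reading, your bookkeeping --- each great circle meets the retained half of the equator in exactly one point, so lines of $\descSt$ are half-open great semicircles --- goes through, and your argument is complete.
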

\begin{proof}
    The hemisphere construction meets the axioms of projective geometry:
    \begin{enumerate}
    \item For any two distinct points $p_1=(p_{1\rho},p_{1\theta},p_{1\phi})$ and $p_1=(p_{2\rho},p_{2\theta},p_{2\phi})$, there exists a unique line $\ell\subset\descSt$ incident with both of them.
    \begin{itemize}
    \item When $p_1\in H\cup\LIS$ and $p_2\in H$, they define a great semicircle $\ell$ which satisfies the axiom.
    \item When $p_1,p_2\in\LIS$, the great semicircle $\LIS$ is incident with both $p_1$ and $p_2$.
    \end{itemize}
    \item For any two distinct lines $\ell_1$ and $\ell_2$, there is a unique point $p$ incident with both of them.
    \begin{itemize}
    \item When $\ell_1\neq\LIS$ and $\ell_2\neq\LIS$, then because they are both great semicircles, there must exist a point $p$ incident with both $\ell_1$ and $\ell_2$.
    \item When $\ell_1\neq\LIS$ and $\ell_2=\LIS$, then there exists a point $p=(\rho,\theta,\pi/2)$ which is incident with both $\ell_1$ and $\ell_2$.
    \end{itemize}
    \item There exist at least four unique points such that no line is incident with more than two of them.
    \vspace{1mm}\\
    The points $(\rho,0,\pi/4)$, $(\rho,\pi/2,\pi/4)$, $(\rho,\pi,\pi/4)$, and $(\rho,-\pi/2,\pi/4)$ satisfy this axiom.\qedhere
    \end{enumerate}
\end{proof}

\begin{definition}\label{def:VPn}
    Let the \textit{vector space construction} $\descVt$ of the real projective plane be the $\bbR^3$ vector space. Let the line at infinity be $\LIV=\{(x,y,0) \mid  (x,y)\in\bbR^{2}\}$
    Points in $\descVt$ are 1-dimensional subspaces such that a point $p=\text{span}(\mathbf{v}$) where $\mathbf{v}\in\bbR^3\backslash\{\mathbf{0}\}$.
    Likewise, lines in $\descVt$ are 2-dimensional subspaces such that a line $\ell=\Span(\mathbf{v},\mathbf{u})$ where $\mathbf{v},\mathbf{u}\in\bbR^3\backslash\{\mathbf{0}\}$ and $\mathbf{v}\neq\mathbf{u}$.
\end{definition}

\begin{theorem}
    The vector space construction $(\descVt)$ is a construction of the real projective plane.
\end{theorem}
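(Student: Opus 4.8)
The plan is to verify the three axioms of projective geometry directly, translating each incidence statement into a statement about subspaces of $\bbR^3$ and invoking standard linear algebra, principally the dimension (Grassmann) formula $\dim(U+W)=\dim U+\dim W-\dim(U\cap W)$ for subspaces $U,W$.

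For the first axiom, I would take two distinct points $p_1=\Span(\mathbf{v}_1)$ and $p_2=\Span(\mathbf{v}_2)$. Distinctness of the one-dimensional subspaces forces $\mathbf{v}_1$ and $\mathbf{v}_2$ to be linearly independent, so $\Span(\mathbf{v}_1,\mathbf{v}_2)$ is genuinely two-dimensional and hence a line incident with both points. For uniqueness, any line (two-dimensional subspace) containing both $p_1$ and $p_2$ must contain $\mathbf{v}_1$ and $\mathbf{v}_2$, hence all of $\Span(\mathbf{v}_1,\mathbf{v}_2)$; since both subspaces have dimension two, they coincide.

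For the second axiom, I would take two distinct lines $\ell_1,\ell_2$, each a two-dimensional subspace of $\bbR^3$. Because they are distinct, $\ell_1+\ell_2$ strictly contains each, so it is three-dimensional and equals $\bbR^3$. The dimension formula then gives $\dim(\ell_1\cap\ell_2)=2+2-3=1$, so the intersection is exactly a one-dimensional subspace, i.e.\ a single point that is incident with both lines, and uniqueness is automatic since the intersection is this unique subspace. This is the step where the ambient dimension three is essential: only here does the formula pin the intersection down to dimension one, so I expect it to be the main point of the argument.

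For the third axiom, I would exhibit the four points $\Span(1,0,0)$, $\Span(0,1,0)$, $\Span(0,0,1)$, and $\Span(1,1,1)$ and check that every triple among the four representative vectors is linearly independent, each corresponding $3\times 3$ determinant being nonzero. Since any three independent vectors span all of $\bbR^3$, no two-dimensional subspace can contain three of these points, so no line is incident with more than two of them. The remaining verifications beyond the dimension count in the second axiom are routine.
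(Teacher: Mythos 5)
Your proposal is correct and takes essentially the same approach as the paper: direct verification of the three projective axioms by translating incidence into statements about subspaces of $\bbR^3$. In fact, your version supplies the justifications the paper's much terser proof leaves implicit --- linear independence of representatives for axiom 1, the Grassmann dimension formula forcing $\dim(\ell_1\cap\ell_2)=1$ for axiom 2, and determinant checks on triples for axiom 3 (with a different but equally valid choice of four points).
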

\begin{proof}
    We can show this meets the axioms of projective geometry as follows:
    \begin{enumerate}
    \item For any two distinct points $p_1=\Span(\langle p_{1x},p_{1y},p_{1z}\rangle)$ and $p_2=\Span(\langle p_{2x},p_{2y},p_{2z}\rangle)$, there exists a unique line $\ell\subset\descVt$ incident with both of them.
    \vspace{1mm}\\
    The span of the two points satisfy the axiom, such that $\ell=\Span(p_1,p_2)$.
    \item For any two distinct lines $\ell_1$ and $\ell_2$, there is a unique point $p$ incident with both of them.
    \vspace{1mm}\\
    For any two distinct 2-dimensional subspaces, there exists a unique 1-dimensional subspace incident with both of them.
    \item There exist at least four unique points such that no line is incident with more than two of them.
    \vspace{1mm}\\
    The points $\Span(\langle0,0,1\rangle)$, $\Span(\langle1,0,1\rangle)$, $\Span(\langle0,1,1\rangle)$, and $\Span(\langle1,1,1\rangle)$ satisfy this axiom.\qedhere
    \end{enumerate}
\end{proof}

\section{$\iso_{PS}:\descR^2_P\to \descR^2_S$}\label{sec:IPS}
The isomorphism $\isoPS:\descPt\to\descSt$ must map points from $\descPt$ to points in $\descSt$ and lines in $\descPt$ to lines in $\descSt$, the latter being great semicircles.
To find $\isoPS$, consider a point 
\(p = (p_1, p_2)\in\bbR^2\subset\descPt\)
with $p_1\neq0$
in Cartesian coordinates.
We can express this point in polar coordinates as 
\(p = (r, \alpha) = (\sqrt{p_1^2 + p_2^2}, \tan^{-1}p_2/p_1)\). 
Similarly, let 
\(s = (s_1, s_2, s_3)\) 
be a point in \(\descSt\), described in spherical coordinates as 
\begin{equation*}
s = (\rho, \theta, \phi) = \left(\sqrt{s_1^2 + s_2^2 + s_3^2}, \tan^{-1}\frac{s_2}{s_1}, \tan^{-1}\frac{\sqrt{s_1^2 + s_2^2}}{s_3}\right). 
\end{equation*}
In \(\descSt\), \(\rho\) remains constant reducing \(s\) to two independent variables \(\theta\) and \(\phi\).

We consider the case where 
\(\alpha = \theta\)
and \(\phi\) is a strictly increasing function of \(r\) with
\(\phi(0) = 0\). 
By investigating the system while holding \(r\) or \(\alpha\) constant, we can gain insights into the behavior of \(\isoPS\).

A set of points in $\descPt$ with constant \(\alpha\)  corresponds to a line passing through the origin. 
In \(\descSt\), this should result in a great semicircle passing through the peak with constant \(\theta\) as seen in Figure~\ref{fig:enter-label}. 
On the other hand, a set of points in $\descPt$ with constant $r$ corresponds to a circle around the origin, and in \(\descSt\), it becomes a full circle around the peak with constant \(\phi\) as seen in Figure~\ref{fig:enter-label}. 
The radius of the circle changes with different \(r\), and similarly, in \(\descSt\), the radius changes as \(r\) varies, with larger \(r\) corresponding to larger \(\phi\). 
Notably, as \(r\to\infty\), \(\phi\to\frac{\pi}{2}\) such that if $p\in\LIP$ then $\isoPS(p)=(\rho,\theta,\pi/2)$ as proven later in Remark~\ref{remark:cont}.
This is also shown through the relation $\descPt\cong\lim_{r\to\infty}r\cdot\operatorname{proj}_{\descSt}{\bbR^2}$ as seen in Figure~\ref{fig:Projection_m}.

\begin{figure}[t]
    \centering
    \includegraphics[width=\textwidth]{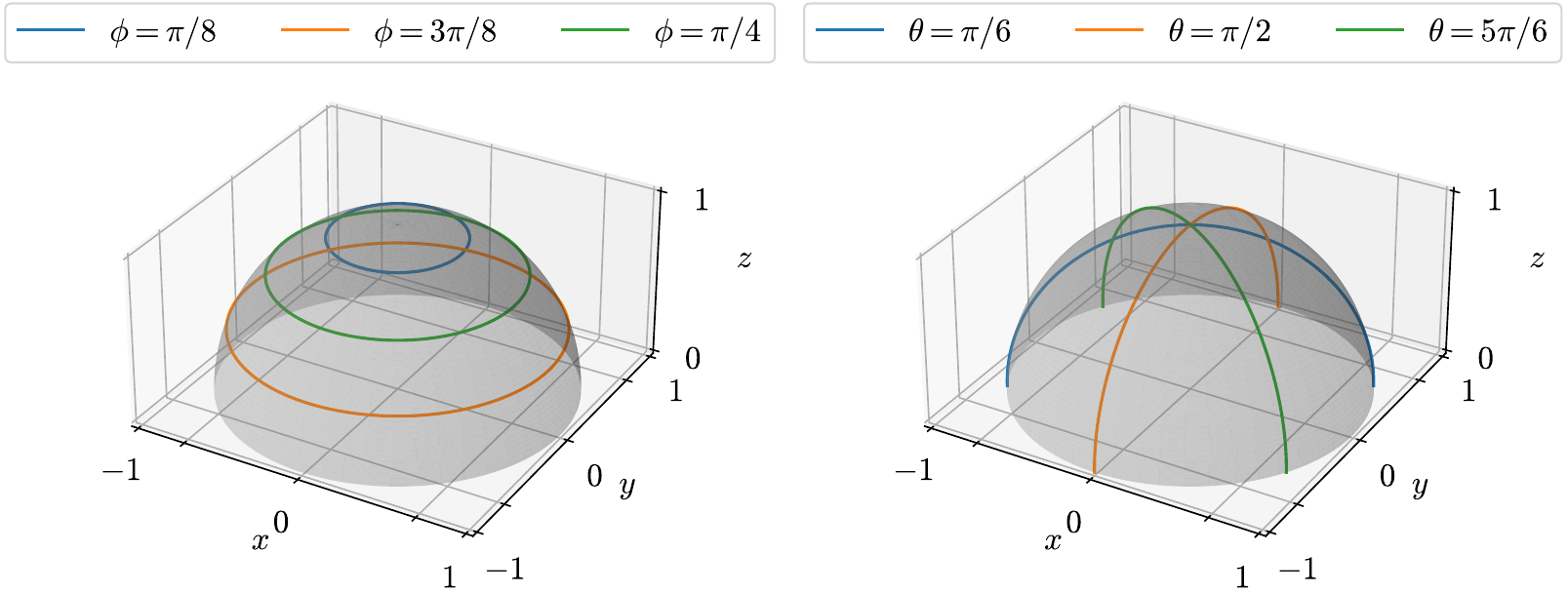}
    \caption{
    \textbf{Left:} Visualization of $s=\isoPS(p)$ for all $p=(\alpha, r)\in\descPt$ with constant $r$ corresponding to constant $\phi$ as shown in the legend.
    \textbf{Right:} Visualization of $s=\isoPS(p)$ for all $p=(\alpha, r)\in\descPt$ with constant $\alpha$ corresponding to constant $\theta$ as shown in the legend.
    }
    \label{fig:enter-label}
\end{figure}

Using the assumptions discussed above, we know that in \(\descPt\), lines of the form \(y = mx + b\) become great semicircles in \(\descSt\) and depending on the value of \(b\), the semicircle tilts from the peak $(0,0,1)$ onto the \(xy\)-plane.
We can use this behavior to find the equation for $\isoPS$.
Specifically, because great semicircles are centered at the origin, we can find them with a normal vector.
Using our knowledge about how $\isoPS$ needs to behave, we can find the plane which contains the great semicircle using the normal vector to that plane.
This normal vector has the form $\nvec =\langle m,-1,b\rangle$.
Then, we can find the points $s$ on the plane using $0=\nvec \cdot s = mx -y + bz = m\rho\sin\phi\cos\theta - \rho\sin\phi\sin\theta + b\rho\cos\phi$.
Because we already defined $\theta$ and $\rho$, we only need to solve for $\phi$:
\begin{align*}
    &0 = m\rho\sin\phi\cos\theta - \rho\sin\phi\sin\theta + b\rho\cos\phi \\
    &\Rightarrow  \tan\phi = \frac{-b}{m\cos\theta - \sin\theta} = \frac{y - mx}{\frac{y}{\sqrt{x^2 + y^2}} - \frac{mx}{\sqrt{x^2 + y^2}}} = \sqrt{x^2 + y^2}\\
    &\Rightarrow \phi = \tan^{-1} \sqrt{x^2+y^2}.
\end{align*}

\begin{figure}[t]
    \centering
    \includegraphics[width=\linewidth]{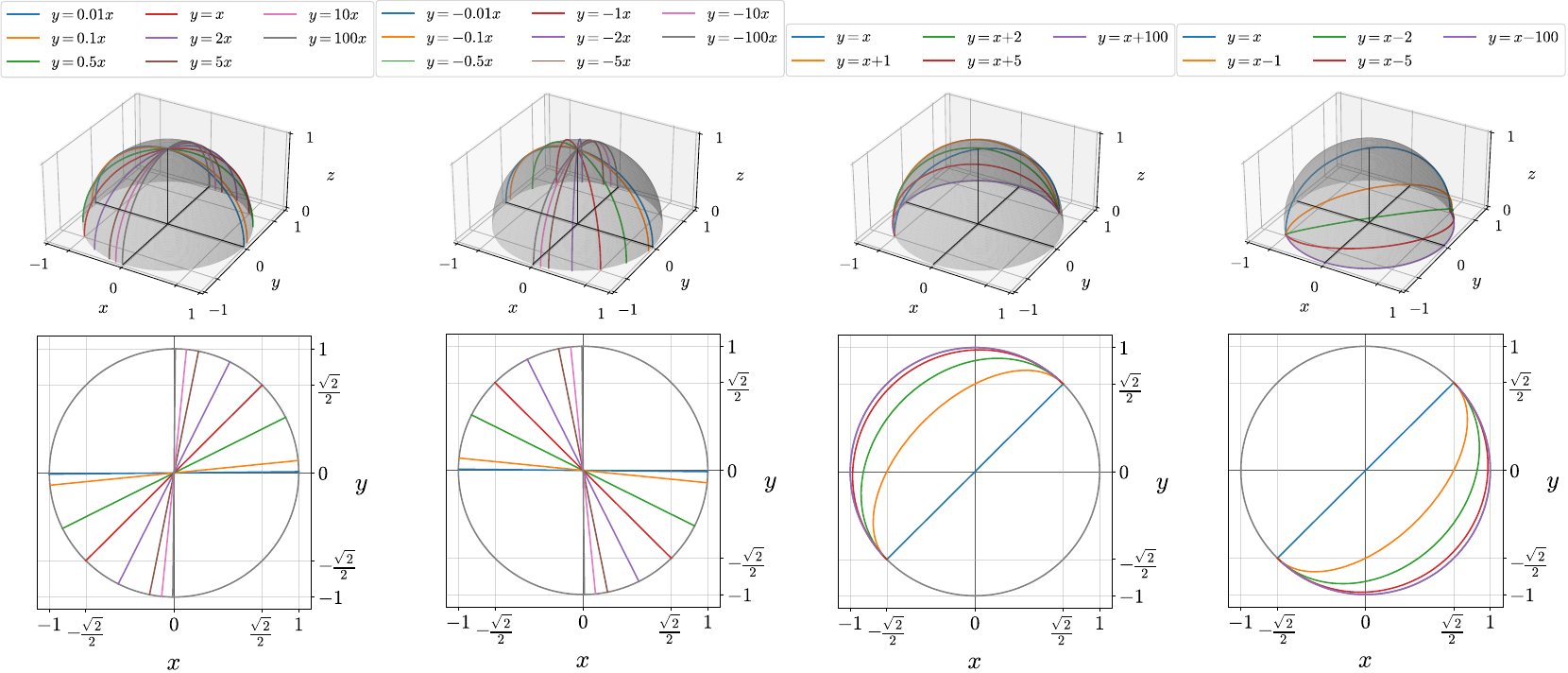}
    \caption{
    Visualization of $\isoPS$ through the relation 
    \mbox{$\descPt\cong\lim_{r\to\infty}r\cdot\operatorname{proj}_{\descSt}{\mathbb{R}^2}$}
    or 
    \mbox{$\descPt\cong\lim_{\rho\to\infty}\operatorname{proj}_{\descSt(\rho)}{\mathbb{R}^2}$}.
    The top legends contain equations of lines 
    \mbox{$\ell_1,\ldots,\ell_m\subset\descP^2$};
    the top plots show 
    \mbox{$\isoPS(\ell)\subset\descSt$};
    the bottom plots show
    \mbox{$\operatorname{proj}_{\descSt}{\mathbb{R}^2}$}.
    }
    \label{fig:Projection_m}
\end{figure}

\begin{definition}
    The isomorphism $\isoPS:\descPt\to\descSt$ can be written as mapping Cartesian to spherical coordinates such that $\isoPS((x,y)) = (\rho,\theta,\phi)$ for all $(x,y)\in\descPt\backslash\LI$ where $\rho\in\bbR\mid \rho>0$, 
    \begin{equation*}
    \theta = \begin{cases} 
          \tan^{-1}y/x & \text{if } x > 0 \\
          \tan^{-1}y/x + \pi & \text{if } x < 0 \\
          \pi/2 & \text{if } x = 0,\\
       \end{cases}
    \end{equation*}
    and $\phi = \tan^{-1}\sqrt{x^2 + y^2}$.
    This entire expression can be simplified by looking at mapping polar to spherical coordinates such that $\isoPS((r,\alpha)) = (\rho, \alpha,\tan^{-1}r)$ for all $(r,\alpha)\in\descPt\backslash\LIP$.
    For the points at infinity $(m)\in\LIP\backslash\{(\infty)\}$, $\isoPS((m))=(\rho,\tan^{-1}r,\pi/2)$.
    Lastly, for the point of infinity $(\infty)$, $\isoPS((\infty))=(\rho,\pi/2,\pi/2)$.
\end{definition}

\begin{lemma}\label{lemma:ndots}
    Let $\ell$ be a non-vertical line in $\descPt$.
    Then, for all points $p\in\ell\backslash\LIP$,  $\isoPS(p)=s\in\descSt$ where $s$ is on a plane normal to $\nvec $ which is defined by $\ell$.
\end{lemma}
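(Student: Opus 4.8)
The plan is to verify directly that $\isoPS$ carries every finite point of a non-vertical line into the plane $P_{\nvec}=\{s\in\bbR^3 : \nvec\cdot s=0\}$. Write the line as $\ell=\{(x,mx+b)\mid x\in\bbR\}\cup\{(m)\}$, so that its finite points are exactly the pairs $p=(x,y)$ with $y=mx+b$, and recall from the derivation preceding the definition of $\isoPS$ that the normal determined by $\ell$ is $\nvec=\langle m,-1,b\rangle$. Since a point $p\in\ell\backslash\LIP$ is finite, its image $s=\isoPS(p)$ has $\phi=\tan^{-1}\sqrt{x^2+y^2}<\pi/2$, hence lies in the open hemisphere $H\subset\descSt$; the lemma then reduces to the single scalar equation $\nvec\cdot s=0$.

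First I would rewrite $s$ in Cartesian coordinates. Using $s_1=\rho\sin\phi\cos\theta$, $s_2=\rho\sin\phi\sin\theta$, $s_3=\rho\cos\phi$ together with $\tan\phi=\sqrt{x^2+y^2}=r$ gives $\sin\phi=r/\sqrt{1+r^2}$ and $\cos\phi=1/\sqrt{1+r^2}$. Working in the polar form $\isoPS((r,\alpha))=(\rho,\alpha,\tan^{-1}r)$, where $\theta=\alpha$ is the polar angle of $p$, we have $\cos\theta=x/r$ and $\sin\theta=y/r$, so the angular factors cancel the $r$ in $\sin\phi$ and the image collapses to the clean form
\begin{equation*}
s=\frac{\rho}{\sqrt{1+x^2+y^2}}\,\langle x,\,y,\,1\rangle .
\end{equation*}

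With $s$ in this form the conclusion is immediate, since
\begin{equation*}
\nvec\cdot s=\frac{\rho}{\sqrt{1+x^2+y^2}}\bigl(mx-y+b\bigr),
\end{equation*}
and substituting the line relation $y=mx+b$ makes the parenthesized factor equal to $mx-(mx+b)+b=0$. Hence $\nvec\cdot s=0$, that is $s\in P_{\nvec}$, which is the required incidence with the plane defined by $\ell$.

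The routine algebra above conceals the one place that needs care: matching the piecewise Cartesian definition of $\theta$ (the branches $x>0$, $x<0$, $x=0$) to the single polar identity $\theta=\alpha$ used in the collapse. I would confirm $\cos\theta=x/r$ and $\sin\theta=y/r$ separately on each branch — the $x<0$ branch relies on the $+\pi$ shift to correct the signs of both $\cos\theta$ and $\sin\theta$, and the $x=0$ branch must be checked against the sign of $y$ — and I would dispose of the degenerate point $p=(0,0)$ (which lies on $\ell$ only when $b=0$) separately, since there $\phi=0$, $s=\langle 0,0,\rho\rangle$, and $\nvec\cdot s=b\rho$ vanishes precisely because $b=0$. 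This branch-matching is the main obstacle; once the angular identities are settled, the dot-product computation is purely formal.
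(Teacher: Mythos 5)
Your proof is correct, and at bottom it establishes the same scalar identity the paper relies on, namely $\nvec\cdot s=0$; the difference is the direction of the argument. The paper's proof is a one-line appeal to design: since $\phi=\tan^{-1}\sqrt{x^2+y^2}$ was obtained in Section~\ref{sec:IPS} by \emph{solving} $0=\nvec\cdot s$ for $\phi$, the lemma is declared true by construction. You instead verify it forward and self-containedly: converting $\isoPS(p)$ back to Cartesian coordinates collapses the image to $s=\frac{\rho}{\sqrt{1+x^2+y^2}}\langle x,y,1\rangle$, after which $\nvec\cdot s=\frac{\rho}{\sqrt{1+x^2+y^2}}(mx-y+b)$ vanishes identically on $y=mx+b$. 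This buys two things the paper's proof does not give: an argument that does not lean on the heuristic derivation (which tacitly assumed the conclusion in order to find $\phi$), and the closed form $s\propto\langle x,y,1\rangle$, which is exactly the formula later taken as the definition of $\isoPV$ and therefore makes Lemma~\ref{lemma:SV} transparent.

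The branch check you flag at $x=0$ is not just pedantry; carried out, it exposes a genuine defect in the paper's piecewise Cartesian definition of $\theta$. For $x=0$, $y<0$ the paper sets $\theta=\pi/2$, which disagrees with its own polar form $\theta=\alpha$ (there $\alpha=-\pi/2$) and sends the point $(0,b)$ with $b<0$ to $s\propto\langle 0,\lvert b\rvert,1\rangle$, for which $\nvec\cdot s=\rho\,(b-\lvert b\rvert)/\sqrt{1+b^2}=2b\rho/\sqrt{1+b^2}\neq 0$. So under the literal piecewise definition the lemma actually fails at the $y$-intercept of every line with negative intercept, while under the polar form $\isoPS((r,\alpha))=(\rho,\alpha,\tan^{-1}r)$ — plainly the intended definition, and the one your main computation uses — everything goes through. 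Since your write-up leaves this check as a stated to-do, the one improvement I would ask for is to resolve it explicitly: say that you adopt the polar form (equivalently, correct the $x=0$ branch to $\theta=\pm\pi/2$ according to the sign of $y$), and note that the Cartesian branches $x>0$ and $x<0$ do satisfy $\cos\theta=x/r$, $\sin\theta=y/r$. With that sentence added, your proof is complete and is strictly more rigorous than the paper's.
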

\begin{proof}
    This is true by design; $\isoPS$ is defined such that $0 = \nvec \cdot s$ is always true for any $s=\isoPS(p)$ where $p=(p_1,p_2)\in\ell\subset\descPt$ and for $\nvec $ defined by $\ell$ such that $\nvec =\langle m, -1, b\rangle$ when $p_1\neq0$ and $\nvec =\langle 1, 0, 0\rangle$ when $p_1=0$.
\end{proof}

\begin{remark}\label{remark:cont}
    $\isoPS(p)=s$ is continuous for $p=(p_x,p_y)$ as $p_x\to\pm\infty$. 
\end{remark}
\begin{proof}
    Given a line $\ell=\{x,mx+b\mid x\in\bbR\}\cup\{(m)\}\subset\descPt$, where $\lim_{x\to\infty}(x,mx+b)=p\in\ell$, 
    $\isoPS(p)=(\rho,\theta,\phi)$ where
    $\theta = \lim_{x\to\infty}\tan^{-1}\frac{mx+b}{x}=\tan^{-1}m$ and
    $\phi = \lim_{x\to\infty}\tan^{-1}\sqrt{x^2 + (mx+b)^2} = \pi/2$.
    Thus, $\isoPS(p)=\isoPS((m))$.
\end{proof}

\begin{theorem}\label{theorem:isoPS}
$\isoPS: \descPt\to\descSt$ is an isomorphism, or equivalently $\isoPS$ preserves line structure such that all points on a line $\ell=\{p_1,p_2,\dots\}\subset\descPt$. Then, when $\ell$ is mapped to $\descSt$ such that $\ell'=\isoPS(\ell)$, $\ell'$ must also equal $\{\isoPS(p_1),\isoPS(p_2),\dots\}\subset\descSt$.
\end{theorem}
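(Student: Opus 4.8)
The plan is to verify the two requirements of Definition~\ref{def:Iso}: that $\isoPS$ is a bijection on points, and that it carries each line of $\descPt$ onto a line (great semicircle) of $\descSt$. The incidence-preservation is largely handed to us by Lemma~\ref{lemma:ndots} and Remark~\ref{remark:cont}, so the real work lies in establishing bijectivity and in checking that the image of an entire line is exactly one great semicircle.

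First I would establish bijectivity using the polar-to-spherical form $\isoPS((r,\alpha)) = (\rho,\alpha,\tan^{-1}r)$. Since $r\mapsto\tan^{-1}r$ is a strictly increasing bijection from $[0,\infty)$ onto $[0,\pi/2)$ and $\alpha$ is preserved as $\theta$, the finite points of $\bbR^2$ map bijectively onto the open hemisphere $H=\{s_3>0\}$, i.e.\ $\phi\in[0,\pi/2)$. The points at infinity all carry $\phi=\pi/2$, and as $m$ ranges over $\bbR$ the value $\theta=\tan^{-1}m$ ranges over $(-\pi/2,\pi/2)$, while $(\infty)\mapsto(\rho,\pi/2,\pi/2)$ supplies $\theta=\pi/2$; together these biject onto $\LIS$, whose defining range is exactly $-\pi/2<\theta\le\pi/2$. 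Since $H\cap\LIS=\emptyset$, the map is a bijection $\descPt\to\descSt$.

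Next I would show lines map to lines, splitting into the three line types of $\descPt$. For a non-vertical line $\ell=\{(x,mx+b)\}\cup\{(m)\}$, Lemma~\ref{lemma:ndots} places every finite image on the plane through the origin with normal $\nvec=\langle m,-1,b\rangle$; intersecting this plane with $S^2$ gives a great circle, and its portion in $H$ is an open great semicircle. A short computation shows that $\isoPS((m))=(\rho,\tan^{-1}m,\pi/2)$ satisfies $\nvec\cdot s=0$, so the point at infinity lands on the same great circle, on the equator, and by Remark~\ref{remark:cont} it is the limit of the finite images; hence $\isoPS(\ell)$ is exactly that great semicircle together with its equatorial endpoint in $\LIS$. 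For a vertical line $\{(c,y)\}\cup\{(\infty)\}$ the analogous computation gives $s_1=c\,s_3$, i.e.\ the plane with normal $\langle 1,0,-c\rangle$, and $\isoPS((\infty))=(\rho,\pi/2,\pi/2)$ lies on it, so this too maps onto a great semicircle. Finally, every point of $\LIP$ has image with $\phi=\pi/2$ (i.e.\ $s_3=0$), and these images fill out $\LIS$, so $\isoPS(\LIP)=\LIS$, itself a line.

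I expect the main obstacle to be the bookkeeping at the equator: confirming that the single projective point at infinity of each line maps to the single equatorial point of $\LIS$ permitted by the convention $-\pi/2<\theta\le\pi/2$ (its antipode being excluded), and that the image of a line is the \emph{entire} semicircle rather than a proper arc. For the latter I would use that $\isoPS$ restricted to $\bbR^2$ is a bijection onto $H$ and that the preimage of the plane $\nvec\cdot s=0$ is precisely $\{y=mx+b\}$ (dividing $\nvec\cdot s=0$ by $s_3$ recovers the line equation), so surjectivity onto each semicircle follows. Combining bijectivity with line-preservation then yields that $\isoPS$ is an isomorphism.
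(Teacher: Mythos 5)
Your proof is correct, and while it shares the paper's central mechanism --- Lemma~\ref{lemma:ndots}'s normal vector $\nvec=\langle m,-1,b\rangle$ placing the images of a line's points on a plane through the origin --- it is substantially more complete than the paper's own proof, which is a two-sentence argument citing only that lemma. Concretely, you supply three things the paper omits: (a) bijectivity of $\isoPS$ (required by Definition~\ref{def:Iso} but never verified in the paper), via the clean observation that $r\mapsto\tan^{-1}r$ bijects $[0,\infty)$ onto $[0,\pi/2)$ so $\bbR^2\to H$ and $\LIP\to\LIS$ separately; (b) the cases Lemma~\ref{lemma:ndots} does not cover, namely vertical lines (your normal $\langle 1,0,-c\rangle$) and the line at infinity itself, both of which the paper's blanket ``for all points $p\in\ell$'' silently skips; and (c) surjectivity of $\isoPS(\ell)$ onto the \emph{entire} great semicircle --- the paper's argument only shows the image is \emph{contained} in the plane's intersection with $\descSt$, whereas equality of sets is what the theorem statement asserts, and your preimage argument (dividing $\nvec\cdot s=0$ by $s_3$ to recover $y=mx+b$) closes that direction. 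You also explicitly check that the image of the unique point at infinity of each line is the unique equatorial point of the corresponding great circle lying in $\LIS$ under the convention $-\pi/2<\theta\le\pi/2$, a bookkeeping step the paper leaves implicit. In short: same skeleton, but your version actually discharges all the obligations of the definition of isomorphism, while the paper's proof trades completeness for brevity.
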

\begin{proof}
For all points $p \in \ell, \isoPS(p)=s$ which, following from Lemma~\ref{lemma:ndots}, is incident with the plane normal to $\nvec $ which defines $\ell'\subset\descSt$.
Thus, $\isoPS$ preserves line structure and is an isomorphism.
\end{proof}

\section{$\iso_{PV}:\descR^2_P\to \descR^2_V$}\label{sec:IPV}
The isomorphism $\isoPV:\descPt\to\descVt$ maps points in $\descPt$ to points (1-dimensional subspaces) in $\descVt$ and lines in $\descPt$ to lines (2-dimensional subspaces) in $\descVt$.
To find $\isoPV$, we use much of the intuition derived above in Section~\ref{sec:IPS}.
We start by using the same normal vector $\nvec =\langle m, -1, b \rangle$.
Then, substituting $m=(y-b)/x$ and solving for $z$, we find $z=1$, such that for all  points $p=(p_x,p_y)\in\descPt\backslash\LIP$,
$\isoPV(p)=\Span(\langle p_x,p_y,1\rangle)$.
Likewise, using the same logic as before, it can be easily shown that for a point $(m)\in\LIP$, $\isoPV((m))=\Span(\langle 1,m,0\rangle)$ and $\isoPV((\infty))=\Span(\langle 0,1,0\rangle)$.

\begin{definition}
    The isomorphism $\isoPV:\descPt\to\descVt$ can be written as mapping Cartesian points to 1-dimensional subspaces such that $\isoPS((x,y)) = \Span(\langle x,y,1\rangle)$ for all $(x,y)\in\descPt\backslash\LI$.
    For the points at infinity $(m)\in\LIP\backslash\{(\infty)\}$, $\isoPS((m))=\Span(\langle1,m,0\rangle)$.
    Lastly, for the point of infinity $(\infty)$, $\isoPS((\infty))=\Span(\langle 0,1,0\rangle)$.
\end{definition}

\begin{lemma}\label{lemma:SV}
    $\descSt$ is isomorphic with $\descVt$ such that for any point $p\in\descPt$, $\isoPV = \Span(\isoPS(p))$.
\end{lemma}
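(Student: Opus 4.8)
The plan is to exhibit the map $\isoSV:\descSt\to\descVt$ given by $\isoSV(s)=\Span(s)$ and to show both that it is an isomorphism and that it intertwines $\isoPS$ and $\isoPV$, i.e.\ $\isoPV(p)=\isoSV(\isoPS(p))=\Span(\isoPS(p))$ for every $p\in\descPt$. Since Theorem~\ref{theorem:isoPS} already gives that $\isoPS$ is a bijective isomorphism $\descPt\to\descSt$, establishing the factorization $\isoPV=\isoSV\circ\isoPS$ together with the fact that $\isoSV$ is an isomorphism will immediately yield $\descSt\isoEq\descVt$.

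First I would verify the pointwise identity by direct computation in the three cases defining $\isoPS$ and $\isoPV$. For a finite point $p=(x,y)$, I would convert $\isoPS(p)=(\rho,\theta,\phi)$ with $\phi=\tan^{-1}\sqrt{x^2+y^2}$ back to Cartesian coordinates; using $\sin\phi=\sqrt{x^2+y^2}/\sqrt{1+x^2+y^2}$ and $\cos\phi=1/\sqrt{1+x^2+y^2}$ gives $\isoPS(p)=\tfrac{\rho}{\sqrt{1+x^2+y^2}}\langle x,y,1\rangle$, whose span is exactly $\Span(\langle x,y,1\rangle)=\isoPV(p)$. The same substitution at $\phi=\pi/2$ handles the points at infinity: $\isoPS((m))$ becomes a nonzero multiple of $\langle 1,m,0\rangle$ and $\isoPS((\infty))$ a multiple of $\langle 0,1,0\rangle$, matching $\isoPV((m))$ and $\isoPV((\infty))$ after taking spans. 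The common theme is that passing to $\Span$ kills the positive scaling factor $\rho/\sqrt{1+x^2+y^2}$, so the spherical normalization is irrelevant to the resulting vector-space point.

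Next I would show $\isoSV$ is itself an isomorphism. Each $s\in\descSt$ is a nonzero vector, so $\Span(s)$ is a well-defined point of $\descVt$; conversely every $1$-dimensional subspace meets $\descSt$ in exactly one point, because the open upper hemisphere $H$ contributes the unique representative of each non-horizontal line and the half-equator $\LIS$, cut out by $-\pi/2<\tan^{-1}(s_2/s_1)\le\pi/2$, selects exactly one of each antipodal pair of horizontal directions. This makes $\isoSV$ a bijection on points. For incidence, a line of $\descSt$ is a great semicircle lying in a unique plane through the origin; the span of its points is precisely that $2$-dimensional subspace, i.e.\ a line of $\descVt$, and conversely each $2$-dimensional subspace restricts to a great semicircle on $\descSt$. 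Hence $\isoSV$ carries lines bijectively to lines and is an isomorphism.

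I expect the main obstacle to be bijectivity of $\isoSV$ along the equator, i.e.\ checking that the chosen half-equator $\LIS$ contains exactly one point of each horizontal $1$-dimensional subspace with no antipodal redundancy or omission; this is where the asymmetric half-open condition $-\pi/2<\tan^{-1}(s_2/s_1)\le\pi/2$ must be used carefully, and it is the only place the precise definition of $\descSt$ enters. With that settled, the isomorphism $\descSt\isoEq\descVt$ follows either directly from $\isoSV$ or, equivalently, from the factorization $\isoPV=\isoSV\circ\isoPS$ and Theorem~\ref{theorem:isoPS}.
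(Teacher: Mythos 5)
Your proof is correct, but it takes a genuinely different route from the paper's. The paper's own proof is two sentences: it observes that $\isoPS$ and $\isoPV$ were both derived from the same normal vector $\nvec=\langle m,-1,b\rangle$, so that $0=\nvec\cdot s$ holds for the images of points on the corresponding line, and then cites Lemma~\ref{lemma:ndots} to conclude $\isoPV(p)=\Span(\isoPS(p))$ ``by design.'' That argument leaves two things implicit: the passage from ``both images lie on the same family of planes'' to pointwise equality of spans (strictly this needs the planes of two distinct lines through $p$ to be intersected), and the actual isomorphism claim $\descSt\isoEq\descVt$. You fill both gaps. Your coordinate computation showing $\isoPS((x,y))=\tfrac{\rho}{\sqrt{1+x^2+y^2}}\langle x,y,1\rangle$, with the analogous identities at the points at infinity, establishes the factorization $\isoPV=\isoSV\circ\isoPS$ directly, and your separate verification that $\isoSV=\Span$ is a bijection from $\descSt$ onto the points of $\descVt$ carrying great semicircles to $2$-dimensional subspaces is exactly what makes the phrase ``$\descSt$ is isomorphic with $\descVt$'' a proved statement rather than an implicit consequence; it is also precisely what the following theorem (that $\isoPV$ is an isomorphism) relies on. You are also right to flag the equator condition as the delicate point, and in fact the issue is worse than you suggest for the paper's literal definition: the quantity $\tan^{-1}(s_2/s_1)$ is invariant under $s\mapsto -s$, so the stated inequality $-\pi/2<\tan^{-1}(s_2/s_1)\le\pi/2$ does not by itself select one point from each antipodal pair; it must be read as a condition on the azimuth $\theta$ computed with the same case convention used in the definition of $\isoPS$, and under that reading your selection argument goes through. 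The trade-off is economy versus completeness: the paper's appeal to the common derivation is shorter and emphasizes the conceptual source of the identity, while your argument is self-contained, handles all cases explicitly, and proves more.
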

\begin{proof}
Both $\isoPS$ and $\isoPV$ were derived using the normal vector $\nvec =\langle m, -1, b\rangle$ such that $0=\nvec \cdot s$ is true for $\isoPS$ and $\isoPV$ with $s\in\descSt$ and $s\in\descVt$ respectively. It follows from Lemma~\ref{lemma:ndots} that $\isoPV = \Span(\isoPS(p))$.
\end{proof}

\begin{theorem}
$\isoPV: \descPt\to\descVt$ is an isomorphism.
\end{theorem}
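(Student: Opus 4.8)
The plan is to realize $\isoPV$ as a composition of maps already shown to be isomorphisms, rather than re-verifying the projective axioms from scratch. By Lemma~\ref{lemma:SV} we have $\isoPV(p) = \Span(\isoPS(p))$ for every $p \in \descPt$, so $\isoPV = \Span \circ \isoPS$, where $\Span$ here denotes the map $\descSt \to \descVt$ sending a hemisphere point $s$ to the one-dimensional subspace it spans. Since $\isoPS$ is an isomorphism by Theorem~\ref{theorem:isoPS}, and a composition of isomorphisms is again an isomorphism (bijections compose to bijections, and line-preservation is preserved under composition), it suffices to show that $\Span \colon \descSt \to \descVt$ is itself an isomorphism.

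First I would verify that $\Span$ is a bijection on points. Every point of $\descSt$ is a nonzero vector, having norm $\rho > 0$ by Definition~\ref{def:SPn}, so $\Span(s)$ is a legitimate point (one-dimensional subspace) of $\descVt$. For surjectivity, an arbitrary one-dimensional subspace $\Span(\mathbf v)$ has a representative of norm $\rho$ unique up to sign, and the construction $\descSt = H \cup \LIS$ is designed to contain exactly one member of each antipodal pair $\{\pm\mathbf v\}$. For injectivity, if $\Span(s) = \Span(s')$ with $s, s' \in \descSt$ then $s' = \lambda s$ for some $\lambda \ne 0$; equal norms force $\lambda = \pm 1$, and the exclusion of antipodes rules out $\lambda = -1$, so $s = s'$.

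Next I would check that $\Span$ sends lines to lines. A line of $\descSt$ is a great semicircle, which lies in a plane through the origin, i.e. a two-dimensional subspace; taking the span of all its points recovers exactly that subspace, a line of $\descVt$ by Definition~\ref{def:VPn}. Conversely every two-dimensional subspace meets $\descSt$ in a great semicircle, so $\Span$ carries the lines of $\descSt$ bijectively onto the lines of $\descVt$. Combining the two steps shows $\Span$ is an isomorphism, and hence so is $\isoPV = \Span \circ \isoPS$.

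I expect the only real subtlety to lie in the bijectivity argument at the equator $\LIS$: one must confirm that the half-open angular condition $-\pi/2 < \tan^{-1}(s_2/s_1) \le \pi/2$ in Definition~\ref{def:SPn} singles out precisely one representative from each antipodal pair lying in the $s_3 = 0$ plane, so that no one-dimensional subspace contained in $\LIV$ is either missed or double-counted. Off the equator the sign of $s_3$ makes the choice of representative canonical, so the open hemisphere poses no difficulty. As an alternative to the composition route, one could verify $\isoPV$ directly: its affine part $(x,y) \mapsto \Span(\langle x,y,1\rangle)$ bijects onto the subspaces with nonzero third coordinate, the images $\Span(\langle 1,m,0\rangle)$ and $\Span(\langle 0,1,0\rangle)$ exhaust those with vanishing third coordinate, and the computations $\nvec \cdot \langle x, mx+b, 1\rangle = 0$ and $\nvec \cdot \langle 1,m,0\rangle = 0$ show that each line of $\descPt$ maps into the two-dimensional subspace normal to $\nvec$, exactly in the spirit of Lemma~\ref{lemma:ndots}.
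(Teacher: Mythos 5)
Your proposal is correct and follows essentially the same route as the paper, whose entire proof is a one-line appeal to Theorem~\ref{theorem:isoPS} and Lemma~\ref{lemma:SV}, i.e., exactly the factorization $\isoPV=\Span\circ\isoPS$ that you spell out. Your version additionally verifies what that one-liner leaves implicit (that $\Span:\descSt\to\descVt$ is a line-preserving bijection), and the equatorial subtlety you flag is real: taken literally, the condition $-\pi/2<\tan^{-1}(s_2/s_1)\leq\pi/2$ in Definition~\ref{def:SPn} does not distinguish antipodal points with $s_3=0$, so it must be read as a constraint on the polar angle of $(s_1,s_2)$ for your bijectivity argument to go through.
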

\begin{proof}
\noindent It follows directly from Theorem~\ref{theorem:isoPS} and Lemma~\ref{lemma:SV} that $\isoPV$ is an isomorphism.
\end{proof}

\section{Conclusion}\label{sec:Conclusion}

In this paper, we have explored the isomorphisms between different constructions of the real projective plane ($\descR^2$) and their applications in the context of direction-sensitive photosensors. 
We established that the extended Euclidean plane ($\descPt$), the hemisphere construction ($\descSt$), and the vector space construction ($\descVt$) are all isomorphic to the real projective plane, $\descR^2$. 
These isomorphisms allow us to map between these different constructions, providing valuable insights into their geometric relationships.
Because $\descPt$ $(=\bbR^2\cup\LIP)$ is a generalized Eucildean plane, the isomorphisms discussed in this work can also be used for points in $\bbR^2$ itself.

These findings have practical implications, especially in the context of direction-sensitive photosensors, where lenses can be used to transform the direction of incoming light into positions on a local plane. This transformation aligns with the concept of isomorphisms between projective plane constructions, providing a physical representation of these mathematical relationships.


\begin{bibdiv}
  \begin{biblist}




\bib{Dalmasson_2018}{article}{
	doi = {\href{https://doi.org/10.1103\%2Fphysrevd.97.052006}{10.1103/physrevd.97.052006}},
	url = {https://doi.org/10.1103\%2Fphysrevd.97.052006},
	date = {2018},
	publisher = {American Physical Society ({APS})},
	volume = {97},
	number = {5},
	author = {J. Dalmasson and G. Gratta and A. Jamil and S. Kravitz and M. Malek and K. Wells and J. Bentley and S. Steven and J. Su},
	title = {Distributed imaging for liquid scintillation detectors},
	journal = {Physical Review D}
}



\bib{1}{book}{
    author={Coxeter, H. S. M. and Beck, G.},
    title={The Real Projective Plane},
    publisher={Springer},
    address={New York, NY}, 
    edition={3.},
    date={1992}
}

\bib{2}{book}{
    author={Hughes, D. R. and Piper, F. C.},
    title={Projective Planes (Graduate Texts in Mathematics)},
    publisher={Springer},
    address={New York, NY}, 
    edition={1.},
    date={1973}
}

  \end{biblist}
\end{bibdiv}

\end{document}